\documentclass[11pt]{article}

\usepackage[utf8]{inputenc}
\usepackage{amsfonts,amsthm}
\usepackage{amsmath}   
\usepackage{latexsym}
\usepackage{amssymb}
\usepackage{mathrsfs}
\usepackage{pifont}
\usepackage{tikz}
\usepackage[all]{xy}
\usepackage[hidelinks]{hyperref}

\newtheoremstyle{rem}{3pt}{3pt}{}{}
{\bfseries}{.}{.5em}{}

\newtheorem{theo}{Theorem}[section]
\newtheorem*{theo*}{Theorem}
\newtheorem{defi}[theo]{Definition}

\newtheorem{prop}[theo]{Proposition}

\newtheorem{coro}[theo]{Corollary}

\newtheorem{rema}[theo]{Remark}

\theoremstyle{rem}

\newcommand{\diag}[3]{ \foreach \t in {1,...,#3} {\draw[thick] (#1+\t,#2-1) rectangle (#1+\t-1,#2);} }
\newcommand{\diagg}[4]{ \foreach \t in {1,...,#3} {\draw[thick] (#1+\t,#2-1) rectangle (#1+\t-1,#2);} \foreach \t in {1,...,#4} {\draw[thick] (#1+\t,#2-1) rectangle (#1+\t-1,#2-2);} }

\usepackage{geometry}
\newcommand{\bC}{\mathbb{C}}
\newcommand{\bZ}{\mathbb{Z}}
\newcommand{\cP}{\mathcal{P}}
\newcommand{\cPC}{\mathcal{C}}
\newcommand{\cPM}{\mathcal{M}}
\newcommand{\rep}[1]{\mathbf{\underline{#1}}}

\title{Pascal, Catalan, Motzkin triangles and tensor product multiplicities}

\author{L. Poulain d'Andecy\footnote{Laboratoire de math\'ematiques de Reims, UMR CNRS 9008, Universit\'e de Reims Champagne-Ardenne, 51100 Reims, France. \emph{email adress}: loic.poulain-dandecy@univ-reims.fr}}

\date{}

\begin{document}

\maketitle

\begin{abstract}
The main purpose of this note is to provide an elementary discussion of some simple triangles of integer numbers in particular through their connections with representation theory of $sl_2$. The triangles under consideration are the Catalan triangle and the Motzkin triangle together with their generalisations that we introduce here. We advocate the point of view that these triangles are given by the well-known and classical Pascal rule starting from a well-chosen initial condition. We give an elementary derivation of the fact that the numbers in these triangles are multiplicities appearing in tensor products of $sl_2$-representations and that they are simply expressed as a difference of generalised binomial coefficients. We also take the opportunity to discuss the ``sum of squares'' phenomenon that happens in these triangles through the lense of representation theory.
\end{abstract}

\section{Introduction}

The first triangle under consideration in this paper is the following:
\begin{equation}\label{PC2}
\cPC_2=\quad\begin{array}{cccccccccccccccccccc}
&&& -1\  && 1 &&&&&&&&&&&& \leadsto& 1\\
&& -1\  && 0 && 1 &&&&&&&&&&& \leadsto& 1\\
& -1 && -1\  && 1 && 1 &&&&&&&&&& \leadsto& 2\\[-0.4em]
\reflectbox{$\ddots$} && -2\  && 0 && 2 && 1 &&&&&&&&& \leadsto& 5\\[-0.4em]
& \vdots && -2\  && 2 && 3 && 1 &&&&&&&& \leadsto& 14\\[-0.4em]
&&\vdots&& 0 && 5 && 4 && 1&&&&&&& \leadsto& 42\\[-0.4em]
&&&\vdots&& 5 && 9 && 5 && 1&&&&&& & \vdots\\
&&&& 0 && 14 && 14 && 6 && 1\\
&&&&& 14 && 28 && 20 && 7 && 1\\
&&&& 0 && 42 && 48 && 27 && 8 && 1\\
&&&&& 42 && 90 && 75 && 35 && 9 && 1\\
&&&& 0 && 132 && \ldots && && && && \ddots \\
&&&&& 132\!\! && && \ldots
\end{array}
\end{equation}
It is defined by the usual Pascal rule (each entry is the sum of the two entries just above) starting from the initial line with two vertices $-1$ and $1$. This triangle appeared at least once in the literature in \cite{Kir95} (in a different context than here) and is found in A112467 in OEIS \cite{OEIS}.

The positive half of the triangle (sometimes arranged differently) is usually found under the name of \emph{Catalan triangle}, see A008313, A008315, A009766, A053121 in \cite{OEIS}. Indeed one can not help to notice that the Catalan numbers appear in the first column of the positive part of the triangle. Less obvious is that when summing the squares of the positive coefficients of each line, one recovers also the Catalan numbers. This is what is indicated by the $\leadsto$ at the end of each line. This remarkable coincidence will happen for all the triangles in the paper and we will give both a combinatorial and a representation-theoretic explanation of this fact.

Note that the above triangle has an obvious symmetry such that the left half is the opposite of the right half. Many of our triangles will be like that and therefore we will usually not show much of the negative side, to save space.

One could argue that the negative side of the triangle is not necessary, and that one could define the positive half of the triangle starting with a single $1$ and enforcing the usual Pascal rule together with the additional restriction of having a first column of $0$. We now explain some advantages of the version above with the negative numbers.

First of all, the version in (\ref{PC2}) is quite suitable for an easy generalisation, where we simply keep the same initial condition and replace the usual Pascal rule by its higher analogues (see Section \ref{sec_Cat} for more precision). For example, here is the analogue obtained by summing three entries at each step:
\begin{equation}\label{PC3}
\cPC_3=\quad\begin{array}{ccccccccccccccccc}
&&& -1 & 1 &&&&&&&& \leadsto& 1\\
& & -1 & 0 & 0 & 1 &&&&&&& \leadsto& 1\\
& -1 & -1 & -1 &1 & 1 & 1&&&&&& \leadsto& 3\\[-0.4em]
\reflectbox{$\ddots$}&&&-1 & 1 & 3 & 2 & 1&&&&& \leadsto& 15\\[-0.4em]
&&\vdots&-3 & 3 & 6 & 6 & 3 & 1&&&& \leadsto& 91\\[-0.4em]
&\vdots&&-6 & 6 & 15 & 15 & 10 & 4 & 1&&&& \vdots\\
&&&-15& 15 & 36 & 40 & 29 & 15 & 5 & 1\\[-0.4em]
&&&-36& 36 & 91 & &\ldots &&&&\ddots\\
&&&-91& 91 & &\ldots 
\end{array}
\end{equation}
In that case, it would still be possible to forget the negative half and construct only the positive half but it would be somewhat less convenient. And this will be more and more so when going to higher and higher analogues. The (positive half) of the triangle (\ref{PC3}) is A089942 in OEIS and the first positive column are the Riordan numbers, see A005043 \cite{OEIS}. The full triangle with its negative half does not seem to appear in OEIS. Note again the ``sum of squares'' phenomenon involving the first positive coefficients of the even lines.

The second argument and our main reason to considering the triangle (\ref{PC2}) (and its generalisations like (\ref{PC3})) is that it provides directly a very simple interpretation of its numbers. Indeed, consider the following formula:
\[(q-q^{-1})(q+q^{-1})^n=\sum_{k\geq0}c_{k,n}(q^k-q^{-k})\ .\]
It should be almost obvious that the integers $c_{k,n}$ appearing here are exactly the integers appearing in the triangle (\ref{PC2}) above (we will need to fix our conventions about the indices, and we will do that more carefully later). Indeed the initial condition corresponds to $(q-q^{-1})$ while the Pascal rule corresponds to the successive multiplications by $(q+q^{-1})$. Therefore, a line of the triangle is simply recording the coefficients in front of the powers of $q$ in the polynomials $(q-q^{-1})(q+q^{-1})^n$ hence the negative coefficients and the antisymmetry.

Then if we divide both sides by $(q-q^{-1})$ in the preceding formula, we get:
\begin{equation}\label{expansion_intro}(q+q^{-1})^n=\sum_{k\geq0}c_{k,n}[k]_q\,,\end{equation}
where $[k]_q=\frac{q^k-q^{-k}}{q-q^{-1}}$ are the usual $q$-numbers. This is probably the simplest interpretation (and could be a definition) of the integers $c_{k,n}$. Note that the well-known Catalan numbers are $c_{1,2n}$ and are the coefficients in front of $[1]_q=1$ when $(q+q^{-1})^{2n}$ is expanded in terms of the $q$-numbers $[k]_q$.

Now, considering the characters of $sl_2$-representation, Formula (\ref{expansion_intro}) implies immediately that the integers $c_{k,n}$ are the multiplicities appearing in the tensor powers of the $2$-dimensional representation of $sl_2$ (see Section \ref{sec_Cat} for the notations and more details):
\begin{equation}\label{tensorproduct_intro}
\rep{2}^{\otimes n}\cong\bigoplus_{k>0} c_{k,n}\rep{k}\,.\end{equation}
Of course, only the positive half of the triangle represents multiplicities, and one may see the negative half as the trick to get the correct positive integers from a simple Pascal rule. What we just sketched we see as the shortest way to calculate the multiplicities appearing in (\ref{tensorproduct_intro}) and this generalises verbatim to tensor powers of any finite-dimensional irreducible representation of $sl_2$. These multiplicities regularly appear in the physics literature because representations of $sl_2$ are also representations of $SU(2)$, see for example \cite{CHI15,CKZ17} for recent occurrences.

Finally, another interesting aspect of the triangle (\ref{PC2}) with its negative sides is that it should make almost obvious that the coefficients $c_{k,n}$ are differences of two binomial coefficients. Indeed, one only has to look closely for a little while to see that the triangle is a difference of two Pascal triangles, one slightly shifted from the other.

We will discuss all these features of the Catalan triangle and its higher analogues with more details and more precision in the paper. Moreover, we will see that the same story applies for another famous triangle of numbers, namely the Motzkin triangle. Here is how we find the Motzkin triangle in our context:
\begin{equation}\label{PM3}
\cPM_3=\quad\begin{array}{ccccccccccccccccc}
&&& -1 &0& 1 &&&&&&&& \leadsto& 1\\
&& -1 & -1 & 0 & 1 & 1 &&&&&&& \leadsto& 2\\
&-1&-2&-2& 0 & 2 & 2 & 1 &&&&&& \leadsto& 9 \\[-0.5em]
\reflectbox{$\ddots$}&&\vdots& & 0 & 4 & 5 & 3 & 1&&&&& \leadsto& 51\\[-0.5em]
&&&&\vdots& 9 & 12 & 9 & 4 & 1&&&& & \vdots\\[-0.5em]
&&&&\vdots& 21 & 30 & 25 & 14 & 5 & 1\\[-0.5em]
&&&&& 51 & 76 & \ldots &&&& \ddots
\end{array}
\end{equation}
This triangle appears in A349812 in OEIS \cite{OEIS}. It is simply given by the Pascal rule of order $3$ applied to an initial condition $-1,0,1$. The positive half of it is the Motzkin triangle, see A026300 or A064189 in OEIS \cite{OEIS}, the first positive column of the triangle giving the Motzkin numbers, A001006 in \cite{OEIS}. Note again the ``sum of squares'' phenomenon. The point of view taken in this paper (Pascal rule with well-chosen initial conditions and $sl_2$-representation theory) applies equally well for the Motzkin triangle and provides immediately natural higher analogues, which seem to be new. We refer to the last section of the paper for more details and more examples.

\paragraph{Organisation of the paper.} In Section \ref{sec_Pas}, as a warm-up and for fixing conventions, we deal with the Pascal triangle and its higher analogues. This allows us to introduce the generalised binomial coefficients that will be used all along, and to show in a toy model example how a triangle is to be interpreted in terms of tensor product multiplicities. We also show in this example the first occurences of the ``sum of squares'' phenomenon.

In Section \ref{sec_Cat}, we study the Catalan triangle and its generalisations (with their negative sides of course). First we give the simple and natural interpretation as an expansion in terms of $q$-numbers, as in (\ref{expansion_intro}) above. Then we quickly deduce the fact that these triangles give multiplicities of tensor products of $sl_2$ representations. Finally, we emphasise the nice ``sum of squares'' property already mentioned and prove it combinatorially and representation-theoretically.

In Section \ref{sec_Mot}, we show how to include the Motzkin triangle in the same sort of discussion. All interpretations and properties of the Catalan triangles already discussed are generalised to the Motzkin triangle and its higher analogues.

\paragraph{Notations.} Throughout the paper, $q$ is an indeterminate and we use the following notation for the $q$-numbers:
\[[k]_q=\frac{q^k-q^{-k}}{q-q^{-1}}=q^{1-k}+q^{3-k}+\dots+q^{k-3}+q^{k-1}\ .\]

\section{Pascal triangles}\label{sec_Pas}

\subsection{Definition}

\paragraph{The usual Pascal triangle.} The usual Pascal triangle is
\begin{equation}\label{P2}
\cP_2= \begin{array}{ccccccccccccc}
&&&\ldots&\scriptstyle{(-2)} & \scriptstyle{(-1)} & \scriptstyle{(0)}& \scriptstyle{(1)} & \scriptstyle{(2)} &  \ldots\\[0.6em]
&&&&&& 1 \\
&&&&& 1 && 1 \\
&&&&1 && 2 && 1 \\
&&&1&& 3 && 3 && 1 \\
&&1 && 4 && 6 && 4 && 1 \\[-0.4em]
&\reflectbox{$\ddots$} &&\vdots& &10&& 10 & & \vdots && \ddots\\
&&&&&& 20
\end{array}
\end{equation}
It is given by a single initial vertex $1$ and the rule that the value of an entry is obtained by adding the two entries just above. More precisely, the columns of the triangle are labelled as shown above it and the lines are labelled naturally by non-negative  integers $n$ with the first line corresponding to $n=0$. Then the initial condition and the recurrence are given by:
\[b_{k,0}^{(2)}=\delta_{k,0}\ \ \ \ \ \text{and}\ \ \ \ \ \ b_{k,n+1}^{(2)}=b_{k-1,n}^{(2)}+b_{k+1,n}^{(2)}\ .\] 
By construction, the integers in the Pascal triangle are the coefficients appearing in the following expansion:
\begin{equation}\label{expansionP2}
[2]_q^n=(q^{-1}+q)^n=\sum_{k}b_{k,n}^{(2)}q^k\ .\end{equation}
In terms of binomial coefficients, the formula is $b_{k,n}^{(2)}=\binom{n}{\frac{k+n}{2}}$. Note that with our convention, we have $b_{k,n}^{(2)}=b_{-k,n}^{(2)}=\binom{n}{\frac{n-k}{2}}$, and that the entries are non-zero only for $k=-n,-n+2,\dots,n-2,n$ on the line $n$. Of course we do not display all the trivial $0$'s.

\paragraph{Higher Pascal triangles.} We fix an integer $d\geq 2$, and we generalise the expansion (\ref{expansionP2}) to
\begin{equation}\label{expansionPd}
[d]_q^n=(q^{1-d}+q^{3-d}+\dots+q^{d-3}+q^{d-1})^n=\sum_{k}b_{k,n}^{(d)}q^k\ ,
\end{equation}
thus defining the integers $b_{k,n}^{(d)}$. The initial condition is $[d]_q^0=1$, which is the same as before: $b_{k,0}^{(d)}=\delta_{k,0}$, and corresponds to a first line with a single non-zero entry $1$ in column $0$. The recursion is given by
\begin{equation}\label{higherPascalrule}b_{k,n+1}^{(d)}=b_{k-(d-1),n}^{(d)}+b_{k-(d-3),n}^{(d)}+\dots + b_{k+d-3,n}^{(d)}+b_{k+d-1,n}^{(d)}\ .\end{equation}
These numbers are calculated recursively by drawing the higher analogues of the Pascal triangle, where we start with the same initial line and use the rule of adding $d$ entries corresponding to (\ref{higherPascalrule}). Note that for $d$ odd, only even powers of $q$ will appear so we only need to show the columns labelled by even integers. For example, for $d=3$, here is the triangle:
\begin{equation}\label{P3}
\cP_3= \begin{array}{ccccccccccccc}
&&\ldots&\scriptstyle{(-4)}\!\! & \scriptstyle{(-2)} & \scriptstyle{(0)}& \scriptstyle{(2)} & \scriptstyle{(4)} &  \ldots\\[0.6em]
&&&&& 1 \\
&&&&1 &1& 1 \\
&&&1\!\!\!&2&3&2 & 1 \\
&&1&3\!\!\!&6&7&6&3& 1 \\[-0.4em]
&\reflectbox{$\ddots$} &&\vdots\!\!\!&16&19& 16 & \vdots&& \ddots
\end{array}
\end{equation}
We show again the labels of the columns to emphasise that we display only the columns labelled by even integers (the others are filled with $0$'s).

For $d$ even, only even powers of $q$ appear when $n$ is even while only odd powers of $q$ appear when $n$ is odd. Again, when drawing a triangle, we only indicate the entries which can be non-zero and this explains that for $d$ even the entries of line $n+1$ are located between two entries of line $n$. For example, for $d=4$, the triangle looks like
\begin{equation}\label{P4}
\cP_4=\qquad \begin{array}{ccccccccccccccccccccc}
&&&&&&&\ldots&\scriptstyle{(-2)}\!\! & \scriptstyle{(-1)} & \scriptstyle{(0)}& \scriptstyle{(1)} & \scriptstyle{(2)} &  \ldots\\[0.6em]
&&&&&&&&&& 1 \\
&&&&&&&1\! && 1 && 1 && 1\\
&&&&1\  &&2&&3\!\! && 4 && 3&& 2&& 1 \\
&1&&3&&6& &10\!&& 12 &&12 && 10 && 6 && 3&&1\\[-0.4em]
\reflectbox{$\ddots$} &&&&&&&&&\ldots\!\!& 44 & \ldots&&&&&&&&&\ddots
\end{array}
\end{equation}
Here, we sum 4 entries at each step. With our conventions, for arbitrary $d$, the recursion (\ref{higherPascalrule}) is reflected by the simple rule that to calculate the value of an entry we sum the $d$ numbers of the line above centered around the given entry.

\subsection{Multiplicities in tensor products}

There is a very simple way to interpret the integers in the Pascal triangles $\cP_d$ as multiplicities appearing in tensor products. Namely consider the Abelian group $\bZ$ and take a non-zero complex number $z$ sufficiently generic (for example, a transcendental number). For any $k\in\bZ$, consider the following one-dimensional representation:
\[L_k\ :\ \ \ \ \bZ\ni m\mapsto (z^k)^m\ .\]
Of course, knowing that $1\mapsto z^k$ is enough to determine $L_k$. For an integer $d\geq 2$, we construct the following $d$-dimensional representation as a direct sum:
\[V_d:=L_{1-d}\oplus L_{3-d}\oplus \ldots\oplus L_{d-3}\oplus L_{d-1}\ .\]
By construction in $V_d$ we have $1\mapsto z^{1-d}+z^{3-d}+\dots+z^{d-3}+z^{d-1}=[d]_z$. Then decomposing the tensor product $V_d^{\otimes n}$ into a direct sum of irreducible repesentations amounts to expanding $[d]^n_z$ as a Laurent polynomial in $z$. Since $z$ is generic, this is equivalent to the way we introduced the integers forming the Pascal triangle $\cP_d$ and therefore we have
\[V_d^{\otimes n}\cong \bigoplus_{k} b_{k,n}^{(d)}L_k\ .\]
In words, the integer $b_{k,n}^{(d)}$ is the multiplicity of the simple module $L_k$ in the decomposition of the representation $V_d^{\otimes n}$.

\subsection{Sums of squares}

In the triangles $\cP_d$, for every $d\geq 2$, we have the following remarkable equality.
\begin{prop}\label{propsquaresPascal}
We have:
\begin{equation}\label{sumofsquares}
b_{0,2n}^{(d)}=\sum_{k}(b_{k,n}^{(d)})^2\ .
\end{equation}
In words, the central coefficient of line $2n$ is equal to the sum of the squared coefficients of line $n$.
\end{prop}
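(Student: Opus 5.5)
The plan is to read off the identity from the generating function (\ref{expansionPd}) by taking the constant term of a product. Write $[d]_q^{2n}=[d]_q^n\cdot[d]_q^n$. By (\ref{expansionPd}), the coefficient of $q^0$ on the left is $b_{0,2n}^{(d)}$. On the right, it is $\sum_k b_{k,n}^{(d)}b_{-k,n}^{(d)}$.

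The key input is the symmetry $b_{-k,n}^{(d)}=b_{k,n}^{(d)}$. It follows because $[d]_q$ is invariant under $q\mapsto q^{-1}$, hence so is $[d]_q^n$, and comparing coefficients in (\ref{expansionPd}) gives the symmetry. Substituting it into the convolution gives $\sum_k (b_{k,n}^{(d)})^2$, which is (\ref{sumofsquares}). All sums are finite, since $[d]_q^n$ is a Laurent polynomial, so there are no convergence issues.

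To fit the spirit of the paper, I would also give the representation-theoretic reading. The dual of $L_k$ is $L_{-k}$, so $V_d$ is self-dual. Then
\[b_{0,2n}^{(d)}=\dim\mathrm{Hom}_{\bZ}(L_0,V_d^{\otimes 2n})=\dim\mathrm{Hom}_{\bZ}(V_d^{\otimes n},V_d^{\otimes n}).\]
Using the decomposition $V_d^{\otimes n}\cong\bigoplus_k b_{k,n}^{(d)}L_k$ and Schur's lemma for the pairwise non-isomorphic one-dimensional modules $L_k$ (this is where the genericity of $z$ is used), the last dimension equals $\sum_k (b_{k,n}^{(d)})^2$.

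There is no real obstacle here. The only point needing care is the symmetry of the coefficients, or equivalently the self-duality of $V_d$, which turns the convolution $\sum_k b_kb_{-k}$ into a sum of squares.
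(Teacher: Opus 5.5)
Your proof is correct and is essentially the paper's argument. Extracting the constant term of $[d]_q^n\cdot[d]_q^n$ is the algebraic form of the paper's combinatorial proof, which splits the paths to $(0,2n)$ at line $n$; there your symmetry $b_{-k,n}^{(d)}=b_{k,n}^{(d)}$ plays the role of $|p_{x\to(0,2n)}|=|p_{\mathbf{1}\to x}|$. Your representation-theoretic reading, via self-duality of $V_d$ and Schur's lemma, is the paper's second proof, and you usefully make explicit that genericity of $z$ is what keeps the $L_k$ pairwise non-isomorphic.
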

For example, for $d=2$, this amounts to the following well-known equality for binomial coefficients:
\[\binom{2n}{n}=\sum_{k} \binom{n}{k}^2\ .\]
One can check the formula in the examples above of $\cP_2$, $\cP_3$ and $\cP_4$. For example, in $\cP_3$, the sums of squares give the sequence $b_{0,2n}^{(3)}=1,3,19,141,...$ (A082758 in \cite{OEIS}), while in $\cP_4$, we get the sequence $b_{0,2n}^{(4)}=1,4,44,580,...$ (A005721 in \cite{OEIS}).

\paragraph{A combinatorial proof.} Here is a simple ``combinatorial'' proof of Formula (\ref{sumofsquares}) using the Pascal rule. First we are going to imagine that we have oriented edges in our triangles connecting an entry to the $d$ entries below it to which it contributes. Here is for example for $d=2$ the beginning of the graph
\begin{center}
\begin{tikzpicture}[scale=0.5]
\node at (0,0) {1};
\draw[thick,->] (-0.2,-0.4) -- (-1.8,-1.6);
\draw[thick,->] (0.2,-0.4) -- (1.8,-1.4);
\node at (-2,-2) {1};\node at (2,-2) {1};
\draw[thick,->] (-2.2,-2.4) -- (-3.8,-3.6);
\draw[thick,->] (-1.8,-2.4) -- (-0.2,-3.6);
\draw[thick,->] (1.8,-2.4) -- (0.2,-3.6);
\draw[thick,->] (2.2,-2.4) -- (3.8,-3.6);
\node at (-4,-4) {1};\node at (0,-4) {2};\node at (4,-4) {1};
\end{tikzpicture}
\end{center}
Thus we can speak of paths going from one entry to another by following these oriented edges. Let us denote by $|p_{x\to y}|$ the number of paths from a vertex $x$ to a vertex $y$ and let us call $\mathbf{1}$ the initial vertex of the graph. Then note that, by the very definition of the triangle, the value of an entry at a certain vertex is equal to the number of paths from $\mathbf{1}$ to this vertex, that is we have
\[b^{(d)}_{k,n}=|p_{\mathbf{1}\to (k,n)}|\ .\]
Now we can make the simple following calculation
\begin{equation}\label{soqproof}
|p_{\mathbf{1}\to (0,2n)}|=\sum_{\text{$x$ in line $n$}}|p_{\mathbf{1}\to x}||p_{x\to(0,2n)}|=\sum_{\text{$x$ in line $n$}}|p_{\mathbf{1}\to x}|^2\ .
\end{equation}
For the second equality we use the simple fact that, by symmetry, for any vertex $x$ in level $n$, we have that the number of paths from $x$ to the central vertex $(0,2n)$ is equal to the number of paths from $\mathbf{1}$ to $x$. Formula (\ref{soqproof}) is exactly Formula (\ref{sumofsquares}).

\paragraph{A representation-theoretic proof.} The ``sum of squares'' formula in Proposition \ref{propsquaresPascal} has also a simple interpretation in representation-theoretic terms. First note that, by definition of the multiplicity, we have
\[b_{0,2n}^{(d)}=\dim \textrm{Hom}_{\bZ}(V_d^{\otimes 2n},triv)\,,\]
where $triv=L_0$ is the trivial representation. On the other hand, we have
\[\sum_{k}(b_{k,n}^{(d)})^2=\dim \textrm{End}_{\bZ}(V_d^{\otimes n})\ .\]
Indeed, by Schur Lemma, the centraliser $\textrm{End}_{\bZ}(V_d^{\otimes n})$ of the representation $V_d^{\otimes n}$ is a direct sum of matrix algebras of sizes the multiplicities appearing in the decomposition.

Now we can use the following reasoning:
\begin{equation}\label{soqrepproof}\begin{array}{rcl}
\textrm{End}_{\bZ}(V_d^{\otimes n}) & = & \textrm{Hom}_{\bZ}(V_d^{\otimes n},V_d^{\otimes n})\\[0.4em]
 & \cong & \textrm{Hom}_{\bZ}(V_d^{\otimes n}\otimes (V^{\star}_d)^{\otimes n},triv)\\[0.4em]
 & \cong & \textrm{Hom}_{\bZ}(V_d^{\otimes 2n},triv)\ .
\end{array}\end{equation}
We have been using the classical fact that for any two representations $M,N$, we have $\textrm{Hom}_{\bZ}(M,N)\cong \textrm{Hom}_{\bZ}(M\otimes N^\star,triv)$ (see for example \cite[page 5]{FH91}) where $N^{\star}$ denotes the contragredient representation of $N$. In our case, the contragredient representation amounts to simply replace the action of $m$ by the action of $-m$ and we see easily that
\[L_k^\star=L_{-k}\ \ \ \ \text{and therefore}\ \ V_d^\star\cong V_d\ .\]
This explains the last equality in (\ref{soqrepproof}). Now taking the dimensions in the isomorphism (\ref{soqrepproof}) recovers the desired numerical equality (\ref{sumofsquares}).

\section{Catalan triangles} \label{sec_Cat}

\subsection{Definition and examples}
We start with the formal definition of what we call the Catalan triangles.
\begin{defi}\label{defPC}
For $d\geq 2$, The triangle of numbers $\{c^{(d)}_{k,n}\}$, with $k\in\bZ$ and $n\in\bZ_{\geq0}$ is defined by the initial condition
\[c^{(d)}_{-1,0}=-1\,,\ \ \ \ c_{1,0}^{(d)}=1\,,\ \ \ \ \text{and}\ \ \ \ c_{i,0}^{(d)}=0\,,\ \forall i\neq-1,1\ ,\]
and the Pascal recurrence of order $d$:
\begin{equation}\label{higherPascalrule2}c_{k,n+1}^{(d)}=c_{k-(d-1),n}^{(d)}+c_{k-(d-3),n}^{(d)}+\dots+c_{k+d-3,n}^{(d)}+c_{k+d-1,n}^{(d)}\ .\end{equation}
We call the resulting triangle the \emph{Catalan triangle of order $d$} and denote it $\cPC_d$.
\end{defi}
Note that this is the same recurrence as for the Pascal triangles of the preceding section. Here is the triangle for $d=2$ where the small numbers on top shows the labelling of the columns:
\begin{equation}\label{PC2b}
\cPC_2=\ \  \begin{array}{ccccccccccccccccccc}
&\ldots&\scriptstyle{(-2)} & \scriptstyle{(-1)} & \scriptstyle{(0)}& \scriptstyle{(1)} & \scriptstyle{(2)} &  \ldots\\[0.6em]
&&& -1\  && 1 \\
&& -1\  && 0 && 1 \\
& -1 && -1\  && 1 && 1\\[-0.4em]
\reflectbox{$\ddots$} && -2\  && 0 && 2 && 1\\[-0.4em]
& \vdots && -2\  && 2 && 3 && 1 \\[-0.4em]
&&\vdots&& 0 && 5 && 4 && 1\\[-0.4em]
&&&\vdots&& 5 && 9 && 5 && 1\\
&&&& 0 && 14 && 14 && 6 && 1\\
&&&&& 14 && 28 && 20 && 7 && 1\\
&&&& 0 && 42 && \ldots && &&  && \ddots\\
&&&&& 42 && && \ldots &&  && &&
\end{array}
\end{equation}
Then the number $c^{(2)}_{k,n}$ is the value of the entry in line $n$ and column $k$ of this triangle. Note that for line $n$ with $n$ even, it is sufficient to display the columns with odd $k$, while for line $n$ with $n$ odd, it is sufficient to display the even columns. The (positive half of the) triangle is called Catalan triangle, see A008313, A008315, A009766, A053121 in \cite{OEIS}, and the numbers appearing in the first column are the famous Catalan numbers, A000108 in \cite{OEIS}. We abuse terminology and keep the name \emph{Catalan triangle} (of order 2) for the whole triangle in (\ref{PC2b}).

For $d=3$, here is what we call the Catalan triangle of order 3:
\begin{equation}\label{PC3b}
\cPC_3=\quad \begin{array}{ccccccccccccccccc}
&\ldots&\scriptstyle{(-3)} & \scriptstyle{(-1)} & \scriptstyle{(1)} & \scriptstyle{(3)} &  \ldots\\[0.6em]
&&& -1 & 1 \\
& & -1 & 0 & 0 & 1 \\
& -1 & -1 & -1 &1 & 1 & 1\\[-0.4em]
\reflectbox{$\ddots$}&&&-1 & 1 & 3 & 2 & 1\\[-0.4em]
&&\vdots&-3 & 3 & 6 & 6 & 3 & 1\\[-0.4em]
&\vdots&&-6 & 6 & 15 & 15 & 10 & 4 & 1\\
&&&-15& 15 & 36 & 40 & 29 & 15 & 5 & 1\\[-0.4em]
&&&-36& 36 & 91 & &\ldots &&&&\ddots\\
&&&-91& 91 & &\ldots 
\end{array}
\end{equation}
We show again the labels of the columns and we note that for $d$ odd, it is enough to show only the odd columns. The (positive half) of the triangle (\ref{PC3}) is A089942 in OEIS and the first positive column are the Riordan numbers, see A005043 \cite{OEIS}.

For $d=4$, what we call the Catalan triangle of order 4 looks like
\begin{equation*}\label{PC4}
\cPC_4=\ \begin{array}{ccccccccccccccccccccc}
&&&&\ldots&\scriptstyle{(-2)} & \scriptstyle{(-1)} & \scriptstyle{(0)}& \scriptstyle{(1)} & \scriptstyle{(2)} &  \ldots\\[0.6em]
&&&&&&-1&& 1 \\
&&&-1&&0 && 0 && 0 && 1\\
-1&&-1&&-1&&-1 && 1 && 1&& 1&& 1 \\[-0.4em]
&\vdots& &-4&& -2 &&0 && 2 && 4 && 3&&2&&1\\[-0.4em]
&&&&\vdots&& -4 && 4 && 9 && 11 && 10 && 6 && 3 && 1\\[-0.4em]
&&&\vdots& & -20\!\! && 0 && 20 && 34 && \ldots & && && &&\\
&&&&&& -34\!\! && 34 && \ldots
\end{array}
\end{equation*}
This triangle is not found in OEIS. Only the first column of positive coefficients appears in A264607 \cite{OEIS}. This is the same for higher $d$.

\paragraph{Equivalent definition and a formula.} In the following proposition we give the simple algebraic interpretation of the positive integers appearing in these triangles, along with an explicit formula in terms of the generalised binomial coefficients of the Pascal triangle $\cP_d$. The first item below can be seen as an equivalent definition of the positive integers $c_{k,n}^{(d)}$, $k>0$.
\begin{prop}\label{propPC}
Recall that $[k]_q=\frac{q^k-q^{-k}}{q-q^{-1}}=q^{1-k}+q^{3-k}+\dots+q^{k-3}+q^{k-1}$.
\begin{enumerate}\item We have
\begin{equation}\label{expansionPC}
[d]_q^n=\sum_{k>0}c_{k,n}^{(d)}[k]_q\ . 
\end{equation}
\item We have the formula in terms of generalised binomial coefficients:
\begin{equation}\label{formulaPC}
c_{k,n}^{(d)}=b_{k-1,n}^{(d)}-b_{k+1,n}^{(d)}\ . 
\end{equation}
\end{enumerate}
\end{prop}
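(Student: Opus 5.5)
The plan is to work with generating polynomials line by line. For each $n$, set
\[C_n(q)=\sum_k c^{(d)}_{k,n}q^k .\]
The initial condition says $C_0(q)=q-q^{-1}$. The Pascal recurrence (\ref{higherPascalrule2}) is exactly the statement that the coefficients of $[d]_q\,C_n(q)$ are those of $C_{n+1}(q)$, since multiplying by $q^{1-d}+\dots+q^{d-1}$ sums the $d$ entries centred at each position. Indeed, it is the same recurrence as (\ref{higherPascalrule}) for $\cP_d$. By induction on $n$, we therefore get
\[C_n(q)=(q-q^{-1})[d]_q^n .\]

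\textbf{Item 2.} Expanding $[d]_q^n=\sum_j b^{(d)}_{j,n}q^j$ by (\ref{expansionPd}) gives
\[(q-q^{-1})[d]_q^n=\sum_k \bigl(b^{(d)}_{k-1,n}-b^{(d)}_{k+1,n}\bigr)q^k .\]
Comparing coefficients yields (\ref{formulaPC}) for all $k\in\bZ$. Alternatively, one can observe that $k\mapsto b_{k-1,n}-b_{k+1,n}$ satisfies the same linear recurrence and has the same initial line $-1,1$. Either way this step is immediate.

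\textbf{Item 1.} First, antisymmetry: since $[d]_q^n$ is invariant under $q\mapsto q^{-1}$, the polynomial $C_n$ is antisymmetric under this substitution. Hence $c^{(d)}_{-k,n}=-c^{(d)}_{k,n}$, which also follows from $b_{k,n}=b_{-k,n}$ and (\ref{formulaPC}). In particular $c^{(d)}_{0,n}=0$. Grouping opposite terms then gives
\[(q-q^{-1})[d]_q^n=\sum_{k>0}c^{(d)}_{k,n}(q^k-q^{-k}).\]
Dividing by $q-q^{-1}$, which is legitimate in Laurent polynomials since each $q^k-q^{-k}$ is divisible by it with quotient $[k]_q$, gives (\ref{expansionPC}).

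The only point needing care is the bookkeeping: checking that the centred-sum rule with the paper's column conventions really matches multiplication by $[d]_q$, including the parity of the occupied columns. I do not expect a genuine obstacle. The antisymmetry step is where the vanishing of the $k=0$ column and the restriction to $k>0$ come from.
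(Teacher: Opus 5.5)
Your proposal is correct and follows essentially the paper's proof. Both identify line $n$ with $(q-q^{-1})[d]_q^n$, use its antisymmetry under $q\mapsto q^{-1}$ to regroup it as $\sum_{k>0}c^{(d)}_{k,n}(q^k-q^{-k})$, and divide by $q-q^{-1}$; for item 2, your coefficient comparison in $(q-q^{-1})\sum_j b^{(d)}_{j,n}q^j$ is the generating-function form of the paper's superposition argument, in which the initial vertices $+1$ (column $1$) and $-1$ (column $-1$) contribute $b^{(d)}_{k-1,n}$ and $-b^{(d)}_{k+1,n}$.
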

\begin{proof}
It is immediate from their definition that the integers $c_{k,n}^{(d)}$ appear in the following expansion:
\[(q-q^{-1})[d]_q^n=\sum_{k\in\bZ}c_{k,n}^{(d)}q^k\ .\]
Indeed the initial condition corresponds to $q-q^{-1}$, while the recurrence corresponds to the multiplication by $[d]_q=q^{1-d}+q^{3-d}+\dots+q^{d-3}+q^{d-1}$. The polynomial $(q-q^{-1})[d]_q^n$ is antisymmetric through the exchange of $q$ and $q^{-1}$ so we have in fact
\[(q-q^{-1})[d]_q^n=\sum_{k>0}c_{k,n}^{(d)}(q^k-q^{-k})\ .\]
Dividing both sides by $q-q^{-1}$ yields the expansion formula in (\ref{expansionPC}).

Formula (\ref{formulaPC}) is almost immediate when looking at the triangle. Indeed there are two contributions to the value of $c_{k,n}^{(d)}$, both of them coming from a Pascal rule of order $d$ applied to one of the initial vertices. The first one comes with a $+$ sign from the initial vertex $+1$ and is equal to $b_{k-1,n}^{(d)}$ (since the initial vertex $+1$ is in column $1$ while it is in column $0$ in the Pascal triangle). Similarly, the second contribution comes with a $-$ sign from the initial vertex $-1$ and is equal to $-b_{k+1,n}^{(d)}$. 
\end{proof}

\paragraph{Example.} For $d=2,3$, the integers $c_{k,n}^{(2)}$ and $c_{k,n}^{(3)}$ appearing in the Catalan triangles $\cPC_2$ and $\cPC_3$ above satisfy
\[(q+q^{-1})^n=\sum_{k>0}c_{k,n}^{(2)}[k]_q\ \ \ \ \text{and}\ \ \ \ (q^2+1+q^{-1})^n=\sum_{k>0}c_{k,n}^{(3)}[k]_q\ .\]
In particular, the Catalan numbers, which are the coefficients $c_{1,2n}^{(2)}$ appear as the coefficients in front of $[1]_q=1$ when $(q+q^{-1})^{2n}$ is expanded in terms of the $q$-numbers $[k]_q$. The Riordan numbers, which are the coefficients $c_{1,n}^{(3)}$, appear as the coefficients in front of $[1]_q=1$ in the $q$-numbers expansion of $(q^2+1+q^{-1})^n$.

The entries in the triangle $\cPC_2$ are all given in terms of binomial coefficients as:
\[c_{k,n}^{(2)}=\binom{n}{\frac{k-1+n}{2}}-\binom{n}{\frac{k+1+n}{2}}\]
and in particular this recovers the well-known formula for the Catalan numbers:  $c_{1,2n}^{(2)}=\binom{2n}{n}-\binom{2n}{n+1}$.

\subsection{Multiplicities in tensor products}

We consider the Lie algebra $sl_2$ and we denote by $\rep{d}$ its unique $d$-dimensional irreducible representation. For example, the representation $\rep{2}$ is the defining (or vector) representation where a matrix in $sl_2$ is simply sent to itself. One possible construction of $\rep{d+1}$ is as the symmetrised $d$-th power of the defining representation $\rep{2}$ (see for example \cite{FH91} for the well-known facts about $sl_2$ that we are using here and below).
\begin{rema}
The representation $\rep{d}$ can also be seen as a representation of the Lie group $SU(2)$ and in this context it is often called the representation of spin $\frac{d-1}{2}$ so that for example the $2$-dimensional representation $\rep{2}$ is the one of spin $\frac{1}{2}$.
\end{rema}
The Cartan element of $sl_2$ is the diagonal matrix $H=\left(\begin{array}{cc} 1 & 0\\ 0 & -1\end{array}\right)$. From $H$, we can define the character of any finite-dimensional representation of $sl_2$. If $V$ is such a representation of dimension $n$ and $\rho_V(H)$ is the matrix representing $H$, we define:
\[ch_V=\sum_{i=1}^n q^{\lambda_i}\ \ \ \ \ \text{where}\ \{\lambda_1,\dots,\lambda_n\}=Sp(\rho_V(H))\,.\]
Thus the character collects the eigenvalues (counted with multiplicities) of the action of $H$. In particular, we have
\[ch_{\rep{2}}=q+q^{-1}=[2]_q\,,\]
and more generally, for example from the description of $\rep{d}$ as a symmetrised power, it is easy to see that
\[ch_{\rep{d}}=q^{d-1}+q^{d-3}+\dots+q^{3-d}+q^{1-d}=[d]_q\ .\]
Now let us say we want to find the decomposition of a tensor power of the representation $\rep{d}$ as a direct sum of irreducible representations:
\[\rep{d}^{\otimes n}\cong\bigoplus_{k} \alpha^{(d)}_{k,n}\rep{k}\ .\]
It turns out that a finite-dimensional representation of $sl_2$ is uniquely determined by its character and therefore it is enough to work out the expansion of the character of $\rep{d}^{\otimes n}$ in terms of the characters of the representations $\rep{k}$, namely we have that the multiplicities $\alpha^{(d)}_{k,n}$ are determined by the identity
\[ch_{\rep{d}^{\otimes n}}=\sum_{k>0}\alpha_{k,n}^{(d)}ch_\rep{k}\ .\]
Of course, the left hand side is $(ch_{\rep{d}})^n=([d]_q)^n$ while the right hand side is its expansion in terms of the $q$-numbers $[k]_q$. Therefore, the first item of Proposition \ref{propPC} translates into the following representation-theoretic interpretation of the Catalan triangles.
\begin{coro}
We have
\[\rep{d}^{\otimes n}\cong\bigoplus_{k} c^{(d)}_{k,n}\rep{k}\ ,\]
where the numbers $c^{(d)}_{k,n}$ are defined in Definition \ref{defPC} and calculated in (\ref{formulaPC}).
\end{coro}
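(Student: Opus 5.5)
The plan is to combine the character theory of $sl_2$ recalled just above with the first item of Proposition \ref{propPC}. Write the decomposition of $\rep{d}^{\otimes n}$ into irreducibles as $\bigoplus_{k>0}\alpha^{(d)}_{k,n}\rep{k}$. Such a decomposition exists by complete reducibility of finite-dimensional $sl_2$-representations, and only finitely many $\alpha^{(d)}_{k,n}$ are non-zero.

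First I would compute the character of the tensor power. The eigenvalues of $H$ on a tensor product $V\otimes W$ are the sums $\lambda_i+\mu_j$, since $H$ acts by $H\otimes 1+1\otimes H$. Hence $ch_{V\otimes W}=ch_V\,ch_W$, and by induction $ch_{\rep{d}^{\otimes n}}=(ch_{\rep{d}})^n=[d]_q^n$. On the other side, characters are additive on direct sums. Therefore
\[[d]_q^n=\sum_{k>0}\alpha^{(d)}_{k,n}[k]_q\ .\]

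Next I would compare this with (\ref{expansionPC}), which gives $[d]_q^n=\sum_{k>0}c^{(d)}_{k,n}[k]_q$. The key point is that the family $\{[k]_q\}_{k>0}$ is linearly independent in $\bZ[q,q^{-1}]$. Indeed, $[k]_q$ has leading term $q^{k-1}$, so the family is triangular with respect to the top power of $q$. Equivalently, after multiplying by $q-q^{-1}$, the elements $q^k-q^{-k}$ for $k>0$ involve distinct monomials. This forces $\alpha^{(d)}_{k,n}=c^{(d)}_{k,n}$ for all $k>0$, which proves the isomorphism.

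As a by-product, the $c^{(d)}_{k,n}$ with $k>0$ are non-negative, which is not obvious from the Pascal recurrence with negative initial data. The only step needing care is the claim that a finite-dimensional $sl_2$-representation is determined by its character. The paper states this as known, but the independence argument above already handles it: the multiplicities are read off uniquely from the character. So I expect no real obstacle beyond citing complete reducibility and the multiplicativity of characters.
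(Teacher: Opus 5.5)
Your proposal is correct and follows essentially the paper's own argument: the paper also writes $ch_{\rep{d}^{\otimes n}}=[d]_q^n=\sum_{k>0}\alpha^{(d)}_{k,n}[k]_q$ and reads off the multiplicities from item 1 of Proposition \ref{propPC}, using that an $sl_2$-representation is determined by its character. You simply make explicit what the paper treats as known, namely the multiplicativity of characters and the linear independence of the $[k]_q$; this independence is exactly what ``determined by its character'' amounts to once complete reducibility is available.
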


\begin{rema}
The integers $c_{k,n}^{(d)}$ being the multiplicities appearing in the tensor product $\rep{d}^{\otimes n}$, they are also the dimensions of the irreducible representations of the centraliser of $\rep{d}^{\otimes n}$. For $d=2$, this centraliser is the Temperley--Lieb algebra. For arbitrary $d$, this centraliser is a quotient of the algebra called in \cite{CP23} the algebra of fused permutations (or the fused Hecke algebra in the quantum group setting). Its representations were described in terms of Young diagrams and semistandard tableaux in \cite{CP23}, and here is the beginning of the Bratelli diagram for $d=3$:
\begin{center}
 \begin{tikzpicture}[scale=0.28]

\node at (-1,0) {$1$};
\node at (0,0) {$\emptyset$};

\draw (0.5,-1) -- (6,-3.5);
\diag{6}{-4}{2};\node at (5,-4.5) {$1$};

\draw (5.5,-5.5) -- (0,-7.5);
\draw (7,-5.5) -- (7,-7.5);
\draw (8.5,-5.5) -- (13.5,-7.5);
\diagg{-1}{-8}{2}{2};\node at (-2,-9) {$1$};
\diagg{6}{-8}{3}{1};\node at (5,-9) {$1$};
\diag{14}{-8}{4};\node at (13,-8.5) {$1$};

\draw (0,-10.5) -- (5.5,-13.5);
\draw (7,-10.5) -- (7,-13.5);
\draw (13.5,-9.5) -- (8.5,-13.5);
\draw (5.5,-10.5) -- (0,-13.5);
\draw (8.5,-10.5) -- (13.5,-13.5);
\draw (15,-9.5) -- (15,-13.5);
\draw (18,-9.5) -- (24,-13.5);
\diagg{-1}{-14}{3}{3};\node at (-2,-15) {$1$};
\diagg{6}{-14}{4}{2};\node at (5,-15) {$3$};
\diagg{14}{-14}{5}{1};\node at (13,-15) {$2$};
\diag{22}{-14}{6};\node at (21,-14.5) {$1$};
\end{tikzpicture}
\end{center}
The rule (called the Pieri rule) is that we consider only partitions with at most two rows (this is for $sl_2$) and we add $d-1$ boxes at each step with no two boxes in the same column. For example, \begin{tikzpicture}[scale=0.28]\diagg{0}{0}{2}{2};\end{tikzpicture} is not connected to \begin{tikzpicture}[scale=0.28]\diagg{0}{0}{3}{3};\end{tikzpicture}. The integers appearing at each vertex reproduce the Catalan triangle $\cPC_3$. They count the number of paths, or equivalently a certain number of semistandard Young tableaux. For example, the number $3$ in the example corresponds to the following three semistandard tableaux:
\[\begin{array}{cccc}
\fbox{\scriptsize{$1$}} & \hspace{-0.35cm}\fbox{\scriptsize{$1$}}& \hspace{-0.35cm}\fbox{\scriptsize{$3$}}& \hspace{-0.35cm}\fbox{\scriptsize{$3$}} \\[-0.2em]
\fbox{\scriptsize{$2$}} & \hspace{-0.35cm}\fbox{\scriptsize{$2$}}
\end{array}\ ,\qquad\quad \begin{array}{cccc}
\fbox{\scriptsize{$1$}} & \hspace{-0.35cm}\fbox{\scriptsize{$1$}}& \hspace{-0.35cm}\fbox{\scriptsize{$2$}}& \hspace{-0.35cm}\fbox{\scriptsize{$3$}} \\[-0.2em]
\fbox{\scriptsize{$2$}} & \hspace{-0.35cm}\fbox{\scriptsize{$3$}}
\end{array}\ ,\qquad\quad \begin{array}{cccc}
\fbox{\scriptsize{$1$}} & \hspace{-0.35cm}\fbox{\scriptsize{$1$}}& \hspace{-0.35cm}\fbox{\scriptsize{$2$}}& \hspace{-0.35cm}\fbox{\scriptsize{$2$}} \\[-0.2em]
\fbox{\scriptsize{$3$}} & \hspace{-0.35cm}\fbox{\scriptsize{$3$}}
\end{array}\ .\]
\end{rema} 
\subsection{Sums of squares}

In the triangles $\cPC_d$, for every $d\geq 2$, similarly as for the Pascal triangles $\cP_d$, we have the following remarkable equality.
\begin{prop}\label{propsquaresPC}
We have:
\begin{equation}\label{sumofsquaresPC}
c_{1,2n}^{(d)}=\sum_{k>0}(c_{k,n}^{(d)})^2\ .
\end{equation}
In words, the first positive coefficient of line $2n$ is equal to the sum of the squares of the positive coefficients of line $n$.
\end{prop}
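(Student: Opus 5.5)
The plan is to mirror the representation-theoretic proof of Proposition \ref{propsquaresPascal}, now over $sl_2$ instead of $\bZ$, and then to give an independent combinatorial check via Proposition \ref{propPC}.

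\textbf{Representation-theoretic argument.} By the Corollary, $\rep{d}^{\otimes n}\cong\bigoplus_{k>0}c^{(d)}_{k,n}\rep{k}$. Since the $\rep{k}$ are pairwise non-isomorphic irreducibles, Schur's lemma gives
\[\dim\textrm{End}_{sl_2}(\rep{d}^{\otimes n})=\sum_{k>0}(c^{(d)}_{k,n})^2 .\]
Applying the Corollary with $2n$ in place of $n$, and using that $\rep{1}$ is the trivial representation, we also get
\[c^{(d)}_{1,2n}=\dim\textrm{Hom}_{sl_2}(\rep{d}^{\otimes 2n},\rep{1}).\]
We then use the adjunction $\textrm{Hom}(M,N)\cong\textrm{Hom}(M\otimes N^\star,triv)$, which holds for Lie algebra representations exactly as for groups. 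It remains to check that $\rep{d}^\star\cong\rep{d}$. The dual representation has $H$ acting by $-\rho(H)^T$, so its character is obtained from $ch_{\rep{d}}$ by $q\mapsto q^{-1}$. Since $[d]_q$ is symmetric under this substitution, and a representation is determined by its character, $\rep{d}^\star\cong\rep{d}$. Hence $(\rep{d}^{\otimes n})^\star\cong\rep{d}^{\otimes n}$. The chain (\ref{soqrepproof}) then goes through verbatim, and taking dimensions gives (\ref{sumofsquaresPC}).

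\textbf{Combinatorial argument.} By (\ref{formulaPC}), $c^{(d)}_{1,2n}=b^{(d)}_{0,2n}-b^{(d)}_{2,2n}$. The path-splitting at line $n$ used for the Pascal case extends to any target: $b_{j,2n}=\sum_x b_{x,n}b_{j-x,n}$. Using the symmetry $b_{j,n}=b_{-j,n}$, this becomes
\[b_{0,2n}-b_{2,2n}=\sum_x b_{x,n}\,(b_{x,n}-b_{x+2,n}).\]
Write $\beta_x=b_{x,n}$. The right-hand side is $\sum_x\beta_x^2-\sum_x\beta_x\beta_{x+2}$.

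We want to compare this with
\[\sum_{k>0}(\beta_{k-1}-\beta_{k+1})^2=\tfrac12\sum_{k\in\bZ}(\beta_{k-1}-\beta_{k+1})^2,\]
where the equality holds because the summand is even in $k$ and vanishes at $k=0$. Expanding the square and reindexing, this equals $\sum\beta^2-\sum\beta_x\beta_{x+2}$, so the two expressions coincide.

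\textbf{Main obstacle.} The only delicate point in the representation-theoretic argument is justifying self-duality of $\rep{d}$ and the Hom adjunction for $sl_2$. The character argument handles self-duality. In the combinatorial argument, the delicate point is bookkeeping the half-sum over $k>0$ versus the full sum over $\bZ$, which the antisymmetry $c_{-k}=-c_k$ (vanishing at $k=0$) takes care of.
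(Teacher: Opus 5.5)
Your proposal is correct and follows the paper's own two arguments: the representation-theoretic chain $\textrm{End}_{sl_2}(\rep{d}^{\otimes n})\cong\textrm{Hom}_{sl_2}(\rep{d}^{\otimes 2n},\rep{1})$, using self-duality of $\rep{d}$ via characters, and a path-splitting computation at line $n$. Your combinatorial version is the paper's path count rewritten with $|p_{\mathbf{1^\pm}\to x_k}|=b^{(d)}_{k\mp1,n}$. Your ``half of the full sum over $\bZ$'' bookkeeping is slightly cleaner than the paper's split into $k>0$ and $k<0$, because it explicitly accounts for the vanishing $k=0$ term.
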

As can be checked in the examples above, in the Catalan triangle $\cPC_2$, the sum of the squares of the positive half of line $n$ gives the sequence $1,1,2,5,14,42,132,...$ of Catalan numbers (A000108 in \cite{OEIS}). 

For $d=3$, this sum of squares gives the sequence $1,1,3,15,91,603,...$ (A099251 in \cite{OEIS}). Note also that the first positive coefficients of all lines (not only the even ones) gives the sequence of Riordan numbers $1,0,1,1,3,6,15,36,91...$ (A005043 in \cite{OEIS}).

For $d=4$, this sum of squares gives the sequence $1,1,4,34,364...$ which is A264607 in \cite{OEIS}.

\paragraph{A representation-theoretic proof.} The representation-theoretic interpretation of the ``sum of squares'' formula in Proposition \ref{propsquaresPC} is very similar to the one given in the preceding section for the Pascal triangles. Namely it relies on the same formal generalities:
\begin{equation}\label{soqrepproof2}\begin{array}{rcl}
\textrm{End}_{sl_2}(\rep{d}^{\otimes n}) & = & \textrm{Hom}_{sl_2}(\rep{d}^{\otimes n},\rep{d}^{\otimes n})\\[0.4em]
 & \cong & \textrm{Hom}_{sl_2}(\rep{d}^{\otimes n}\otimes (\rep{d}^{\star})^{\otimes n},\rep{1})\\[0.4em]
 & \cong & \textrm{Hom}_{sl_2}(\rep{d}^{\otimes 2n},\rep{1})\ .
\end{array}\end{equation}
Note here that $\rep{1}$ is indeed the trivial representation of $sl_2$. The only key fact (which is typical of $sl_2$) is that the irreducible representations $\rep{k}$ are self-contragredient. This can be seen easily since in the contragredient representation $\rep{k}^{\star}$, the action of $H$ is replaced by the action of $-H$. Therefore, the character of $\rep{k}^{\star}$ is the character of $\rep{k}$ with $q$ replaced by $q^{-1}$, but the character of $\rep{k}$ is symmetric in $q$ and $q^{-1}$, hence we have $ch_{\rep{k}^{\star}}=ch_{\rep{k}}$ and therefore $\rep{k}^{\star}\cong \rep{k}$.

As in the preceding section, the dimension of $\textrm{End}_{sl_2}(\rep{d}^{\otimes n})$ is the sum of the squares of the multiplicities, that is, of the positive entries in line $n$, while the dimension of $\textrm{Hom}_{sl_2}(\rep{d}^{\otimes 2n},\rep{1})$ is exactly the first entry in line $2n$. Thus taking the dimensions in the isomorphism (\ref{soqrepproof2}) gives the equality (\ref{sumofsquaresPC}).

\paragraph{A combinatorial proof.} Here we give a combinatorial proof of Formula (\ref{sumofsquaresPC}) independent of representation theory and in the same spirit as the proof for the Pascal triangle.

As before, we will see our triangles as oriented graphs, by connecting an entry to the $d$ entries below it to which it contributes (the edges are always oriented downward). Let us denote by $|p_{x\to y}|$ the number of paths following these oriented edges from a vertex $x$ to a vertex $y$. We will denote $\mathbf{1^+}$ and $\mathbf{1^-}$ the two initial vertices of the graph. Then note that, by the very definition of the triangles, the value of an entry at a certain vertex $v$ is equal to the number of paths from $\mathbf{1^+}$ to $v$ minus the number of paths from $\mathbf{1^-}$ to $v$.

From now on, we will denote $X$ the vertex in position $(1,2n)$ and $x_k$ the vertices $(k,n)$ of line $n$, where $k\in\mathbb{Z}$. We make the following calculation
\[\begin{array}{rcl}
c^{(d)}_{1,2n} & = & |p_{\mathbf{1^+}\to X}|-|p_{\mathbf{1^-}\to X}|\\
 & = & \displaystyle \sum_{k\in\bZ} \Bigl(|p_{\mathbf{1^+}\to x_k}||p_{x_k\to X}|-|p_{\mathbf{1^-}\to x_k}||p_{x_k\to X}| \Bigr)\\
 & = & \displaystyle \sum_{k\in\bZ} \Bigl(|p_{\mathbf{1^+}\to x_k}|^2-|p_{\mathbf{1^-}\to x_k}||p_{\mathbf{1^+}\to x_k}| \Bigr)\ .
\end{array}\]
In the second line we have been using that a path from $\mathbf{1^+}$ to $X$ is the concatenation of a path from $\mathbf{1^+}$ to $x_k$ (for some $k$) and a path from $x_k$ to $X$; and similarly when we start from $\mathbf{1^-}$. In the third line, we used that by symmetry we have $|p_{x_k\to X}|=|p_{\mathbf{1^+}\to x_k}|$ for any vertex $x_k$ on line $n$.

Now we use the symmetry $|p_{\mathbf{1^+}\to x_k}|=|p_{\mathbf{1^-}\to x_{-k}}|$ valid for any $k\in\bZ$, to rewrite everything in terms of $x_k$ with positive $k$. We get:
\[c^{(d)}_{1,2n}=\sum_{k>0} \Bigl(|p_{\mathbf{1^+}\to x_k}|^2-|p_{\mathbf{1^-}\to x_k}||p_{\mathbf{1^+}\to x_k}| \Bigr)+\sum_{k>0} \Bigl(|p_{\mathbf{1^-}\to x_k}|^2-|p_{\mathbf{1^+}\to x_k}||p_{\mathbf{1^-}\to x_k}| \Bigr)\ .\]
We see that this is the same as:
\[\sum_{k>0}(c_{k,n}^{(d)})^2=\sum_{k>0}\bigl(|p_{\mathbf{1^+}\to x_k}|-|p_{\mathbf{1^-}\to x_k}|\bigr)^2\ ,\]
which concludes the combinatorial verification of (\ref{sumofsquaresPC}).

\section{Motzkin triangles}\label{sec_Mot}

\subsection{Definition and examples}\label{defPM}

We start with the formal definition of the Motzkin triangle and its higher analogues.
\begin{defi}
For $d\geq 2$, the triangle of numbers $\{m^{(d)}_{k,n}\}$, with $k\in\bZ$ and $n\in\bZ_{\geq0}$ is defined by the initial condition
\[m^{(d)}_{-1,0}=-1\,,\ \ \ \ m_{1,0}^{(d)}=1\,,\ \ \ \ \text{and}\ \ \ \ m_{i,0}^{(d)}=0\,,\ \forall i\neq-1,1\ ,\]
and the following recurrence:
\begin{equation}\label{higherPascalrule3}m_{k,n+1}^{(d)}=m_{k-(d-1),n}^{(d)}+m_{k-(d-2),n}^{(d)}+\dots+m_{k+d-2,n}^{(d)}+m_{k+d-1,n}^{(d)}\ .\end{equation}
We call the resulting triangle the \emph{Motzkin triangle of order $d$} and denote it $\cPM_d$.
\end{defi}
We emphasise that the recurrence is different from before: now we go by steps of $1$ in the $k$-indices instead of steps of $2$. It corresponds to multiplication by the polynomial $q^{-(d-1)}+q^{-(d-2)}+\dots+q^{d-2}+q^{d-1}$ (see Proposition \ref{propPM} below).

Here we show the simplest example, for $d=2$, with the labels of the columns indicated:
\begin{equation}\label{PM2b}
\cPM_2=\quad\begin{array}{ccccccccccccccccc}
&\ldots&\scriptstyle{(-2)} & \scriptstyle{(-1)} & \scriptstyle{(0)}& \scriptstyle{(1)} & \scriptstyle{(2)} &  \ldots\\[0.6em]
&&& -1 &0& 1 &&&&&&&& \\
&& -1 & -1 & 0 & 1 & 1 &&&&&&& \\
&-1&-2&-2& 0 & 2 & 2 & 1 &&&&&&  \\[-0.5em]
\reflectbox{$\ddots$}&&\vdots& & 0 & 4 & 5 & 3 & 1&&&&& \\[-0.5em]
&&&&\vdots& 9 & 12 & 9 & 4 & 1&&&& & \\[-0.5em]
&&&&\vdots& 21 & 30 & 25 & 14 & 5 & 1\\[-0.5em]
&&&&& 51 & 76 & \ldots &&&& \ddots
\end{array}
\end{equation}
For $d=2$ the recurrence contains three terms: $m_{k,n+1}^{(2)}=m_{k-1,n}^{(2)}+m_{k,n}^{(2)}+m_{k+1,n}^{(2)}$, so the triangle is given by a Pascal rule of order $3$. The first positive column consists of the Motzkin numbers $m_{1,n}^{(2)}=1,1,2,4,9,21,51,...$, see A001006 in \cite{OEIS}, the positive half being referred to as the Motzkin triangle, see A026300 or A064189 in OEIS \cite{OEIS}.
\begin{rema}
For arbitrary $d$, the recurrence contains $2d-1$ terms, so the triangle $\cPM_d$ is given by a Pascal rule of order $2d-1$ (always an odd number) from the inital condition $-1,0,1$.
\end{rema}
The next example is for $d=3$:
\begin{equation}\label{PM3}
\cPM_3=\quad\begin{array}{ccccccccccccccccc}
&\ldots&\scriptstyle{(-2)} & \scriptstyle{(-1)} & \scriptstyle{(0)}& \scriptstyle{(1)} & \scriptstyle{(2)} &  \ldots\\[0.6em]
&&& -1 &0& 1 &&&&& \\
& -1 & -1 &0& 0 &0& 1 & 1 &&& \\[-0.5em]
\reflectbox{$\ddots$}&&-2& -2 & 0 & 2 & 2 & 2 & 2& 1\\[-0.5em]
&&\vdots& -4 & 0 & 4 & 8 & 9 & 7&5 & 3 & 1  \\[-0.5em]
&&&\vdots&0& 17 & 28 & 33 & 32 & 25 & 16 & 9 & 4 & 1\\[-0.5em]
&&&&\vdots& 61 & 110 & 135 & \ldots &&&&&& \ddots\\
&&&&& 245
\end{array}
\end{equation}
Here, each entry is the sum of the 5 entries above it, for example $17=-4+0+4+8+9$. The sequence in the first column $m_{1,n}^{(3)}=1,0,2,4,17,61,245,...$ is an analogue of the Motzkin sequence and does not seem to be in OEIS (and neither is the full triangle).

\paragraph{Equivalent definition and formulas.} We give the simple algebraic interpretation of the positive integers appearing in the triangles $\cPM_d$, along with an explicit formula in terms of the generalised binomial coefficients and in terms of the coefficients of the Catalan triangles. The first item below can be seen as an equivalent definition of the positive integers $m_{k,n}^{(d)}$, $k>0$.
\begin{prop}\label{propPM}
\begin{enumerate}\item We have
\begin{equation}\label{expansionPM}
([d-1]_q+[d]_q)^n=\sum_{k>0}m_{k,n}^{(d)}[k]_q\ . 
\end{equation}
\item We have the formula in terms of generalised binomial coefficients:
\begin{equation}\label{formulaPM1}
m_{k,n}^{(d)}=b_{2k-2,n}^{(2d-1)}-b_{2k+2,n}^{(2d-1)}\ . 
\end{equation}
\item We have the formula in terms of coefficients of Catalan triangles:
\begin{equation}\label{formulaPM2}
m_{k,n}^{(d)}=c_{2k-1,n}^{(2d-1)}+c_{2k+1,n}^{(2d-1)}\ . 
\end{equation}
\end{enumerate}
\end{prop}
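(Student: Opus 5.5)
I will follow the same scheme as in the proof of Proposition \ref{propPC}: first identify a generating Laurent polynomial whose coefficients are the $m^{(d)}_{k,n}$, and then read off the three items from it.

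\textbf{Step 1: the generating polynomial.} The initial line is $-1,0,1$ in columns $-1,0,1$, which corresponds to $q-q^{-1}$. The recurrence (\ref{higherPascalrule3}) is multiplication by $P_d(q)=q^{-(d-1)}+\dots+q^{d-1}$. Hence
\[(q-q^{-1})P_d(q)^n=\sum_{k\in\bZ} m^{(d)}_{k,n}q^k .\]
The exponents of $P_d$ split by parity into $q^{1-d}+q^{3-d}+\dots+q^{d-1}$ and $q^{2-d}+q^{4-d}+\dots+q^{d-2}$, so $P_d=[d]_q+[d-1]_q$. The left-hand side is antisymmetric under $q\leftrightarrow q^{-1}$. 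Therefore it equals $\sum_{k>0}m^{(d)}_{k,n}(q^k-q^{-k})$, and dividing by $q-q^{-1}$ gives (\ref{expansionPM}).

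\textbf{Step 2: binomial formula.} I will substitute $q\mapsto q^2$ in the previous identity, writing $Q=q^2$. Then $P_d(Q)=[2d-1]_q$, because its exponents become $-2(d-1),\dots,2(d-1)$ in steps of $2$, which are exactly the exponents of $[2d-1]_q$. By (\ref{expansionPd}) this gives
\[(q^2-q^{-2})[2d-1]_q^n=\sum_j\bigl(b^{(2d-1)}_{j-2,n}-b^{(2d-1)}_{j+2,n}\bigr)q^j .\]
On the other hand, the same left-hand side equals $\sum_k m^{(d)}_{k,n}q^{2k}$ by Step 1. Comparing the coefficients of $q^{2k}$ yields (\ref{formulaPM1}). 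Equivalently, one can argue on the triangle itself, as in the proof of Proposition \ref{propPC}: $\cPM_d$ is the Pascal triangle of order $2d-1$ on the columns $2k$, started from $+1$ in column $1$ and $-1$ in column $-1$ after halving the labels.

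\textbf{Step 3: Catalan formula.} By (\ref{formulaPC}) with $d$ replaced by $2d-1$,
\[c^{(2d-1)}_{2k-1,n}+c^{(2d-1)}_{2k+1,n}=b^{(2d-1)}_{2k-2,n}-b^{(2d-1)}_{2k,n}+b^{(2d-1)}_{2k,n}-b^{(2d-1)}_{2k+2,n}.\]
The middle terms telescope, and the result equals $m^{(d)}_{k,n}$ by (\ref{formulaPM1}), which is (\ref{formulaPM2}).

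The only delicate point is bookkeeping of the indices in the substitution $q\mapsto q^2$. One has to check that $P_d(q^2)$ is exactly $[2d-1]_q$ and that the shift by $\pm1$ in $k$ becomes a shift by $\pm2$ in the columns of $\cP_{2d-1}$. Everything else is immediate from Proposition \ref{propPC}.
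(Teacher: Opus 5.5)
Your proof is correct and follows essentially the same route as the paper. Item 1 comes from the generating polynomial $(q-q^{-1})P_d(q)^n$ together with $P_d=[d-1]_q+[d]_q$, and item 3 from telescoping with (\ref{formulaPC}). For item 2, your substitution $q\mapsto q^2$ with $P_d(q^2)=[2d-1]_q$ is a precise algebraic version of the paper's remark that the columns of $\cP_{2d-1}$ are indexed by even integers while those of $\cPM_d$ are indexed by all integers.
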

\begin{proof}
The first two items are proved in the same way as for the Catalan triangles in Proposition \ref{propPC}. One only has to notice for item 1 that:
\[q^{-(d-1)}+q^{-(d-2)}+\dots+q^{d-2}+q^{d-1}=[d-1]_q+[d]_q\ .\]
Note also that the factor $2$ in the $k$-indices in Formula (\ref{formulaPM1}) comes from the fact that the columns of the Pascal triangle $\cP_{2d-1}$ are indexed by even integers while the columns of $\cPM_d$ are indexed by all integers.

Finally, item 3 follows immediately from item 2 and Formula (\ref{formulaPC}) in Proposition \ref{propPC}, which we recall is saying that $c_{k,n}^{(d)}=b_{k-1,n}^{(d)}-b_{k+1,n}^{(d)}$.
\end{proof}

\paragraph{Example.} For $d=2$, the integers $m_{k,n}^{(2)}$ appearing in the Motzkin triangle $\cPM_2$ satisfy
\[(q+1+q^{-1})^n=\sum_{k>0}m_{k,n}^{(2)}[k]_q\ .\]
In particular, the Motzkin numbers, which are the coefficients $m_{1,2n}^{(2)}$, appear as the coefficients in front of $[1]_q=1$ when $(q+1+q^{-1})^{n}$ is expanded in terms of the $q$-numbers $[k]_q$.

The entries in the triangle $\cPM_2$ are all given in terms of trinomial coefficients $b_{k,n}^{(3)}$ and in terms of numbers from the Catalan triangle of order $3$. In particular, we have:
\[m_{1,n}^{(2)}=b_{0,n}^{(3)}-b_{4,n}^{(3)}=c_{1,n}^{(3)}+c_{3,n}^{(3)}\,.\]
In words, entries of the Motzkin triangles are differences of next to consecutive coefficients of the Pascal triangle $\cP_3$ of order $3$, and are also sums of consecutive entries in the Catalan triangle $\cPC_3$ of order $3$.

For $d=3$, the formula relating the entries of $\cPM_3$ with the entries of $\cPC_5$ can be checked looking at (\ref{PM3}) above together with
\begin{equation}\label{PC5}
\cPC_5=\quad\begin{array}{ccccccccccccccccc}
&\ldots&\scriptstyle{(-3)}\!\! & \scriptstyle{(-1)} & \scriptstyle{(1)} & \scriptstyle{(3)} &  \ldots\\[0.6em]
&&& -1 & 1 &&&&& \\
& -1 &0 & 0 &0& 0 & 1 &&& \\[-0.5em]
\reflectbox{$\ddots$}&&\ldots\!\!& -1 & 1 & 1 & 1 & 1 & 1\\[-0.5em]
&&&\vdots & 1 & 3 & 5 & 4 & 3&2 & 1 &   \\[-0.5em]
&&&\vdots&  5 & 12 & 16 & 17 & 15 & 10 & 6 & 3 & 1\\[-0.5em]
&&&&16 & 45 && \ldots& & & & &&\ddots
\end{array}
\end{equation}

\subsection{Multiplicities in tensor products}
As for the Catalan triangles, the positive entries of the Motzkin triangles are multiplicities appearing in tensor products of $sl_2$-representations. Indeed we have:
\[ch_{\rep{d-1}}+ch_{\rep{d}}=[d-1]_q+[d]_q=q^{d-1}+q^{d-2}+\dots+q^{2-d}+q^{1-d}\ .\]
Therefore, the first item of Proposition \ref{propPM} translates immediately into the following representation-theoretic interpretation of the Motzkin triangles.
\begin{coro}
We have
\[(\rep{d-1}\oplus\rep{d})^{\otimes n}\cong\bigoplus_{k} m^{(d)}_{k,n}\rep{k}\ .\]
\end{coro}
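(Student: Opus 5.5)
The plan is to transport the character identity of Proposition \ref{propPM}, item 1, into a statement about $sl_2$-modules, exactly as was done for the Catalan triangles. The ingredients are the following.
\begin{itemize}
\item The representation $(\rep{d-1}\oplus\rep{d})^{\otimes n}$ is a finite-dimensional $sl_2$-representation.
\item Its character is $(ch_{\rep{d-1}\oplus\rep{d}})^n$, because characters are multiplicative under tensor products: the eigenvalues of $H$ on a tensor product are the sums of eigenvalues, so $q$-powers multiply.
\item Characters are additive under direct sums. Hence $ch_{\rep{d-1}\oplus\rep{d}}=[d-1]_q+[d]_q$, which is the identity displayed just before the statement.
\end{itemize}
For $d=2$ we read $\rep{1}$ as the trivial representation, with character $[1]_q=1$, and the argument is unchanged.

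Next I invoke item 1 of Proposition \ref{propPM}, which rewrites the character as
\[([d-1]_q+[d]_q)^n=\sum_{k>0}m^{(d)}_{k,n}[k]_q=\sum_{k>0}m^{(d)}_{k,n}\,ch_{\rep{k}}\ .\]
This equals the character of $\bigoplus_{k>0} m^{(d)}_{k,n}\rep{k}$, provided this direct sum makes sense. For that, the coefficients $m^{(d)}_{k,n}$ with $k>0$ must be non-negative integers.

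Securing this non-negativity is the only point needing care. The simplest route is to use complete reducibility. The module $(\rep{d-1}\oplus\rep{d})^{\otimes n}$ decomposes as $\bigoplus_k \alpha_{k}\rep{k}$ with $\alpha_k\geq 0$. Taking characters gives $\sum_k \alpha_k[k]_q=\sum_k m^{(d)}_{k,n}[k]_q$. The $q$-numbers $[k]_q$, $k>0$, are linearly independent, since their leading terms $q^{k-1}$ are distinct. Therefore $\alpha_k=m^{(d)}_{k,n}$, which forces non-negativity a posteriori.

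Finally, since a finite-dimensional $sl_2$-representation is determined up to isomorphism by its character (the fact already used for the Catalan corollary), the isomorphism in the statement follows.
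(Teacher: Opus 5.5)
Your proposal is correct and follows the same approach as the paper. You compute $ch_{\rep{d-1}\oplus\rep{d}}=[d-1]_q+[d]_q$, apply item 1 of Proposition \ref{propPM}, and conclude because a finite-dimensional $sl_2$-representation is determined by its character. Your complete-reducibility and linear-independence step is a precision the paper leaves implicit, and since it already identifies the multiplicities $\alpha_k$ with $m^{(d)}_{k,n}$, it gives the isomorphism directly, which makes your final appeal to characters determining representations redundant.
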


\begin{rema}
For $d=2$, the positive entries in the line $n$ of the Motzkin triangle $\cPM_2$ are the multiplicities appearing in $(\rep{1}\oplus\rep{2})^{\otimes n}$. These multiplicities are the dimensions of the irreducible representations of the centraliser of this tensor product. Such a centraliser was studied in \cite{BH14} under the name of \emph{Motzkin algebra}. Its dimension is given by the ``sum of squares'' formula, see below, and is equal to the first positive coefficient $m_{1,2n}^{(2)}$ of line $2n$, namely the $2n$-th Motzkin number. For arbitrary $d$, the entries of the Motzkin triangle $\cPM_d$ can be seen as the dimension of the irreducible representations of centralisers of $(\rep{d-1}\oplus\rep{d})^{\otimes n}$, which can be seen as algebras generalising the Motzkin algebras of \cite{BH14}.
\end{rema} 

\subsection{Sums of squares}

In the triangles $\cPM_d$, for every $d\geq 2$, similarly as for the Pascal and the Catalan triangles, we have the following remarkable equality.
\begin{prop}\label{propsquaresPM}
We have:
\begin{equation}\label{sumofsquaresPM}
m_{1,2n}^{(d)}=\sum_{k>0}(m_{k,n}^{(d)})^2\ .
\end{equation}
In words, the first positive coefficient of line $2n$ is equal to the sum of the squares of the positive coefficients of line $n$.
\end{prop}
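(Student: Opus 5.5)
The plan is to follow the representation-theoretic argument used for Proposition \ref{propsquaresPC}, this time applied to the representation $W:=\rep{d-1}\oplus\rep{d}$ in place of $\rep{d}$. By the Corollary of the preceding subsection, $W^{\otimes n}\cong\bigoplus_{k>0}m^{(d)}_{k,n}\rep{k}$. Schur's lemma then makes $\textrm{End}_{sl_2}(W^{\otimes n})$ a direct sum of matrix algebras of sizes $m^{(d)}_{k,n}$, so its dimension is $\sum_{k>0}(m^{(d)}_{k,n})^2$.

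Next I will use the same chain of isomorphisms as in (\ref{soqrepproof2}):
\[\textrm{End}_{sl_2}(W^{\otimes n})\cong \textrm{Hom}_{sl_2}(W^{\otimes n}\otimes (W^\star)^{\otimes n},\rep{1})\cong \textrm{Hom}_{sl_2}(W^{\otimes 2n},\rep{1})\,.\]
The second isomorphism needs $W^\star\cong W$. This holds because each $\rep{k}$ is self-contragredient, as shown before, and contragredience commutes with direct sums. Equivalently, the character $[d-1]_q+[d]_q$ is invariant under $q\mapsto q^{-1}$.

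Finally, $\dim\textrm{Hom}_{sl_2}(W^{\otimes 2n},\rep{1})$ is the multiplicity of $\rep{1}$ in $W^{\otimes 2n}$. By the Corollary this is $m^{(d)}_{1,2n}$, and taking dimensions gives (\ref{sumofsquaresPM}). There is no real obstacle: the only point to check is self-duality of the reducible module $W$, which is immediate.

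As an alternative, one could give a path-counting proof that mirrors the combinatorial proof in the Catalan case. The only facts needed there are two symmetries of the graph defined by (\ref{higherPascalrule3}). The first is the reflection $k\mapsto -k$, which exchanges $\mathbf{1^+}$ and $\mathbf{1^-}$. The second is the up–down symmetry $|p_{x_k\to X}|=|p_{\mathbf{1^+}\to x_k}|$ with $X=(1,2n)$. Both follow because the step set $\{-(d-1),\dots,d-1\}$ is symmetric. The algebraic manipulation is then identical to the one in the proof of Proposition \ref{propsquaresPC}.

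One detail must be checked in this alternative: the sums over $k\in\bZ$ now include $k=0$. The $k=0$ contribution is $|p_{\mathbf{1^+}\to x_0}|^2-|p_{\mathbf{1^-}\to x_0}||p_{\mathbf{1^+}\to x_0}|$, and it vanishes because the reflection symmetry gives $|p_{\mathbf{1^+}\to x_0}|=|p_{\mathbf{1^-}\to x_0}|$. This is the one place where care is needed.
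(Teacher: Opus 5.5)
Your proposal is correct and follows the paper's own proof, which simply says that both the representation-theoretic argument (now applied to the self-dual module $\rep{d-1}\oplus\rep{d}$) and the path-counting argument from the Catalan case carry over unchanged. Your explicit check that the $k=0$ term vanishes by the reflection symmetry is a detail the paper leaves implicit; the same term already appears in the Catalan case for $d$ even and $n$ odd.
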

\begin{proof}
Both the combinatorial and representation-theoretic proofs used for the Catalan triangles applies in exactly the same way here. 
\end{proof}
For example, taking the fourth line ($n=3$) of the Motzkin triangle $\cPM_2$ in (\ref{PM2b}), we have $4^2+8^2+9^2+7^2+5^2+3^2+1^2=245$, which is indeed the first positive coefficient of line seven ($n=6$).

\paragraph{A general ``sum of squares'' phenomenon.} The ``sum of squares'' phenomenon that we have seen appearing several times now  can be formulated at the following general level.
\begin{prop}
For any Laurent polynomial $P\in\bC[q,q^{-1}]$ satisfying $P(q)=P(q^{-1})$, define coefficients $a_{k,n}$ by
\[P^n=\sum_{k>0}a_{k,n}[k]_q\,.\]
Then we have for any $n\geq 0$,
\[\sum_{k>0}(a_{k,n})^2=a_{1,2n}\ .\]
\end{prop}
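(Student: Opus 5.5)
Since $P$ may have complex (even negative) coefficients, I will not use representation theory here. The argument will be purely algebraic, via a constant-term extraction, and it mirrors the combinatorial proofs given earlier.

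\emph{Step 1: coefficient extraction.} Multiply the defining identity by $q-q^{-1}$:
\[(q-q^{-1})P^n=\sum_{k>0}a_{k,n}(q^k-q^{-k}).\]
For any symmetric Laurent polynomial $Q=\sum_{k>0}\alpha_k[k]_q$, the coefficient $\alpha_k$ is the coefficient of $q^k$ in $(q-q^{-1})Q$. In particular $\alpha_1=\mathrm{CT}\bigl(q^{-1}(q-q^{-1})Q\bigr)=\mathrm{CT}\bigl((1-q^{-2})Q\bigr)$, where $\mathrm{CT}$ denotes the constant term. First I check that the expansion exists and is unique. Every symmetric Laurent polynomial is a unique finite combination of the $[k]_q$, by triangularity in the top degree. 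Since $P^n$ is symmetric, the coefficients $a_{k,n}$ are well defined.

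\emph{Step 2: a bilinear form.} Define $\langle Q,R\rangle=\mathrm{CT}\bigl((1-q^{-2})QR\bigr)$ for symmetric $Q,R$. Then $a_{1,2n}=\langle P^n,P^n\rangle$. So it suffices to show that the $[k]_q$, $k>0$, are orthonormal for this form:
\[\langle [k]_q,[l]_q\rangle=\delta_{k,l}.\]
Expanding $P^n=\sum_k a_{k,n}[k]_q$ bilinearly then gives $\sum_k a_{k,n}^2$. This is the Weyl-type orthogonality of $sl_2$ characters.

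\emph{Step 3: the orthonormality computation.} This is the only real calculation and the main point to get right. I will use symmetry to replace $(1-q^{-2})$ by the symmetric weight $\tfrac12(q-q^{-1})(q^{-1}-q)$.

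Indeed, write $F=[k]_q[l]_q$, which is symmetric. Then $\mathrm{CT}(q^{-2}F)=\mathrm{CT}(q^{2}F)$. Hence
\[\langle [k],[l]\rangle=\mathrm{CT}(F)-\tfrac12\mathrm{CT}\bigl((q^2+q^{-2})F\bigr)=-\tfrac12\,\mathrm{CT}\bigl((q-q^{-1})^2F\bigr).\]
The right-hand side equals $-\tfrac12\mathrm{CT}\bigl((q^k-q^{-k})(q^l-q^{-l})\bigr)$. For $k,l>0$ this is $-\tfrac12(-1-1)\delta_{k,l}=\delta_{k,l}$.

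Combining the three steps yields $a_{1,2n}=\sum_{k>0}a_{k,n}^2$. I expect no real obstacle beyond the symmetrization trick in Step 3. Note that the argument never uses positivity, which is why it holds for arbitrary complex coefficients.
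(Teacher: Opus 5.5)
Your proof is correct, and it takes a different route from the paper. The paper realises $P$ as the character of $V=\bigoplus_{k>0}a_{k,1}\rep{k}$ and reruns the argument used for the Catalan triangles. There the $a_{k,n}$ are the multiplicities in $V^{\otimes n}$, and self-duality of the $\rep{k}$ gives $\textrm{End}_{sl_2}(V^{\otimes n})\cong\textrm{Hom}_{sl_2}(V^{\otimes 2n},\rep{1})$, whose dimensions are $\sum_{k>0}a_{k,n}^2$ and $a_{1,2n}$.

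You instead introduce the bilinear form $\langle Q,R\rangle=\mathrm{CT}\bigl((1-q^{-2})QR\bigr)$ and show that the $[k]_q$ are orthonormal for it. This is an algebraic form of Weyl's orthogonality for $SU(2)$-characters. It is also close in spirit to the paper's combinatorial proof for $\cPC_d$: both extract the coefficient of $q$ in $(q-q^{-1})P^n\cdot P^n$ and use the $q\leftrightarrow q^{-1}$ symmetry. Your steps all check out, including $\mathrm{CT}(q^{-2}F)=\mathrm{CT}(q^{2}F)$ and $\mathrm{CT}\bigl((q^k-q^{-k})(q^l-q^{-l})\bigr)=-2\delta_{k,l}$ for $k,l>0$.

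The paper's argument is conceptual and computation-free. Taken literally, however, it only covers the case where all $a_{k,1}$ are non-negative integers, since otherwise $V$ does not exist. Reaching the statement for arbitrary $P\in\bC[q,q^{-1}]$ needs an extra step: for instance, both sides are polynomials in $a_{1,1},\dots,a_{N,1}$ that agree on $\bZ_{\geq0}^N$, hence agree identically.

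Your argument handles complex and negative coefficients directly, as you anticipated. Bilinearity also gives at once the slightly more general identity $a_{1,n+m}=\sum_{k>0}a_{k,n}a_{k,m}$.
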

\begin{proof}
First note that any Laurent polynomial $P\in\bC[q,q^{-1}]$ satisfying $P(q)=P(q^{-1})$ can indeed be expanded uniquely in terms of the $q$-numbers $[k]_q$: take the coefficient $a$ of the leading term $q^N$ of $P$, write $P=a[N]_q+P'$ and reiterate for $P'$ and so on.

The representation-theoretic proof that we used in the preceding sections is perfectly available here and as simple as before. Indeed, one can see the polynomial $P$ as an $sl_2$-character:
\[P=ch_V\,,\ \ \ \ \ \ \text{where $V=\bigoplus_{k>0} a_{k,1}\rep{k}$.}\]
Then the integers $a_{k,n}$ are the multiplicities appearing in the tensor product $V^{\otimes n}$, while $a_{1,2n}$ is the multiplicity of the trivial representation in $V^{\otimes 2n}$. The reasoning of Section \ref{sec_Cat} can therefore be repeated without change.
\end{proof}
As a concluding example, take
\[P=q^2+q+3+q^{-1}+q^{-2}=2[1]_q+[2]_q+[3]_q\ .\]
We will calculate $P^n$ from a triangle of numbers using the same trick as before, namely, by writing:
\[(q-q^{-1})P^n=\sum_{k>0}a_{k,n}(q^k-q^{-k})\ .\]
Therefore, the coefficients $a_{k,n}$ are found in the following triangle
\begin{equation}\label{finalexample}
\begin{array}{ccccccccccccccccc}
&\ldots&\scriptstyle{(-2)} & \scriptstyle{(-1)} & \scriptstyle{(0)}& \scriptstyle{(1)} & \scriptstyle{(2)} &  \ldots\\[0.6em]
&&& -1 &0& 1 &&&&& \\
& -1 & -1 &-2& 0 &2& 1 & 1 &&& \\[-0.5em]
\reflectbox{$\ddots$}&&& -6 & 0 & 6 & 6 & 6 & 2& 1\\[-0.5em]
&&&\vdots & 0 & 24 & 32 & 33 & 19&11 & 3 & 1  \\[-0.5em]
&&&&\vdots & 113 & & \ldots &&&&& \ddots
\end{array}
\end{equation}
The recursive rule for going from one line to the next corresponds to $P$ and is of the form \begin{tikzpicture}[scale=0.25]
\draw (0,0) -- (0,2);
\node at (0,1){$\scriptstyle{3}$};
\draw (-0.5,0) -- (-2,2);
\node at (-1.25,1){$\scriptstyle{1}$};
\draw (-1,0) -- (-4,2);
\node at (-2.5,1){$\scriptstyle{1}$};
\draw (0.5,0) -- (2,2);
\node at (1.25,1){$\scriptstyle{1}$};
\draw (1,0) -- (4,2);
\node at (2.5,1){$\scriptstyle{1}$};
\end{tikzpicture}
(which is not a rule of a Pascal triangle). For example, $24=-6+0+3\times6+6+6$. The triangle gives the multiplicities in $V^{\otimes n}$ with $V=\rep{1}\oplus\rep{1}\oplus\rep{2}\oplus\rep{3}$. One can check the sum of squares with $2^2+1^2+1^2=6$ and $6^2+6^2+6^2+2^2+1^2=113$.

\end{document}